\theoremstyle{plain}
\newtheorem{theorem}{Theorem}
\newtheorem{proposition}[theorem]{Proposition}
\newcommand{\und}{\underline}
\newcommand{\Ocal}{\mathscr{O}}
\newcommand{\Lcal}{\mathscr{L}}
\newcommand{\Ical}{\mathscr{I}}
\newcommand{\Mcal}{\mathscr{M}}
\DeclareMathOperator{\Pic}{Pic}
\DeclareMathOperator{\Hom}{Hom}
\begin{document}
\title{Autoduality for curves of compact type }
\author{Eduardo Esteves and Fl\'avio Rocha}
\maketitle

\section{Introduction}

Let $C$ be a connected projective reduced curve defined 
over an algebraically closed field,
$J=\Pic^0(C)$ the connected component of the identity of its 
Picard scheme and $\mathscr{L}$ a line bundle of degree 1 over $C$. 
The classical 
\emph{Autoduality Theorem} says the following: 
If $C$ is smooth, the Abel map $A:C \to  {J}$, 
given by $P \mapsto  \mathscr{L} \otimes \mathscr{I}_P$ 
where $\mathscr{I}_P$ is the sheaf of ideals of $P$, is well 
defined and induces an isomorphism 
$A^*: \textrm{Pic}^0(J) \to  J$ which does not depend on 
the choice of $\mathscr{L}$; see \cite{mudd}, Prop.~6.9, p.~118. 
We say that $J$, the \emph{Jacobian} of $C$, is \emph{autodual}.

In a more general setup, where $C$ has singularities, 
the Autoduality Theorem was first proved in \cite{e} for 
irreducible curves with at most double points, 
with $J$ being replaced by its natural compactification $\bar{J}$, the moduli 
space of degree-0 torsion-free rank-1 sheaves over $C$, 
constructed by D'Souza \cite{ds} and Altman and Kleiman \cite{ak}. 
Later, Arinkin \cite{ar} 
extended the validity of the theorem to irreducible curves with at 
most planar singularities. (Also, the autoduality isomorphism extends to 
compactifications, as proved in \cite{ek} and \cite{ar2}.)

The compactified Jacobian $\bar J$ appears as a singular fiber 
of the Hitchin fibration \cite{hi}. More precisely, 
the Hitchin fibration is a map from the moduli 
space of Higgs bundles to that of spectral curves whose fiber over a 
given spectral curve parameterizes torsion-free sheaves over that curve. 
Autoduality of compactified Jacobians is thus related to autoduality 
of the Hitchin fibration; see discussion in \cite{ar}, Sect.~7.

However, spectral curves need not be irreducible. So a natural question 
arises: Does autoduality hold for reducible curves, like 
Deligne--Mumford stable ones? In this note we show autoduality for curves 
of compact type and, more generally, treelike curves with 
planar singularities. 

Compactified Jacobians of reducible curves are less understood and more 
complex. For instance, they may contain 
more than one copy of $J$. Nevertheless, an Abel map $A:C\to\bar J$ 
has been constructed when $C$ is Gorenstein; see \cite{l}, which extends 
\cite{ce}. In case $C$ is of compact type, the image of $A$ lies inside a 
copy of $J$, as observed and studied in \cite{cp}. 

So, assume from now on that $C$ is of compact type, with irreducible 
components $C_1,...,C_n$. For each $n$-tuple 
${\und d}=(d_1,...,d_n) \in \mathbb{Z}^n $, let 
$J^{\und d}$ be the scheme parameterizing line bundles over $C$ of 
multidegree $\und d$. Thus $J=J^{\und 0}$ and each $J^{\und d}$ is 
isomorphic to $J$. For each $i=1,\dots,n$ and each integer $e$, let 
$J^e_i$ be the corresponding scheme for $C_i$. 
Restriction to the components of $C$ gives us a 
natural map 
$$
\pi^{\und d}=(\pi_1^{\und d},\dots,\pi_n^{\und d}):
J^{\und d}\to J^{d_1}_1\times\cdots\times J^{d_n}_n,
$$
which is an isomorphism because $C$ is of compact type.

Abel maps come in all sorts, but there is a common 
property of those constructed so far that we explore. We say that a map 
$A:C\to J^{\und d}$ is a \emph{decomposable Abel map} 
if $\pi_j^{\und d}A|_{C_i}$ is constant for $i\neq j$ and 
$A_i:=\pi_i^{\und d}A|_{C_i}$ is an Abel map for 
each integer $i=1,\dots,n$, that is, there is a 
line bundle $\mathscr{L}_i$ over $C_i$ 
such that $A_i$ sends $P$ 
to $\mathscr{L}_i\otimes\Ocal_{C_i}(-P)$. 
We say that $(\Lcal_1,\dots,\Lcal_n)$ describes $A$. 

Finally, we prove the Autoduality Theorem: 
\emph{If $C$ is a curve of compact type, 
and $A:C\to J^{\und d}$ is a decomposable Abel map, the pullback 
$A^*:\Pic^0(J^{\und d})\to  J$ is an isomorphism which independs 
of the $n$-tuple $(\Lcal_1,\dots,\Lcal_n)$ describing $A$.}

The proof relies on observing that the various pullbacks form a commutative 
diagram
$$
\begin{CD}
\textrm{Pic}^0(J^{\und{d}}) @>A^*>> J\\
@VVV @V\pi^{\und 0}VV\\
\Pic^0(J^{d_1}_1)\times\cdots\times\Pic^0(J^{d_n}_n) 
@>(A_1^*,\dots,A_n^*)>> J^0_1\times \cdots \times J^0_n 
\end{CD}
$$
where $A_i:=\pi_i^{\und d}\circ A|_{C_i}$ for $i=1,\dots,n$, and the left 
vertical map is induced by the isomorphism $\pi^{\und d}$; see Proposition 1. 
Both vertical maps are isomorphisms. By the classical Autoduality 
Theorem, so is the bottom map. Hence, the commutativity of the diagram yields 
that $A^*$ is also an isomorphism.

The same statement and proof hold if each $A_i$ maps $P$ to 
$\mathscr{L}_i\otimes\Ocal_{C_i}(P)$. Also, $C$ need not be of compact type: 
We need only that each component $C_i$ has at most planar singularities and 
intersects its complement in $C$ at separating nodes. In this case, a 
similar isomorphism to $\pi^{\und d}$ exists and Arinkin's Autoduality 
Theorem can be used for the components. We give the more general statement 
in Theorem 2 below.

A generalization of the result obtained in this note has been recently
made available at \cite{MRV} by Melo, Rapagnetta and Viviani, with an
appendix by L\'opez-Mart\'\i n. They claim autoduality for any curve
with planar singularities.

\section{Autoduality}

Let $C$ be a connected projective reduced scheme of dimension $1$ over an 
algebraically closed field $k$, 
a \emph{curve} for short. A \emph{subcurve} of $C$ 
is a reduced union of irreducible components of $C$, thus not necessarily 
connected. A point $P\in C$ is a \emph{separating node} if $P$ is an ordinary 
node of $C$ and $C-P$ is not connected. A point $P\in C$ is called 
\emph{reducible} if it lies on at least two irreducible components of $C$. We 
say that $C$ is \emph{treelike} if all its reducible points are separating 
nodes.

A coherent sheaf $\Ical$ over $C$ is said to be \emph{torsion-free} if it is 
isomorphic to a subsheaf of the (constant) sheaf of rational functions, 
\emph{rank-$1$} if it is generically invertible, and \emph{simple} if 
$\Hom(\Ical,\Ical)=k$. The \emph{degree} of a torsion-free, rank-1 sheaf 
$\Ical$ is $\deg(\Ical):=\chi(\Ical)-\chi(\Ocal_C)$. 
It follows from \cite{est}, Prop.~1, p.~3049, 
that a torsion-free rank-1 sheaf over $C$ is simple only if it is 
invertible at separating points. The converse is true if $C$ is treelike, in 
which case the restriction of a 
simple torsion-free rank-1 sheaf to any connected subcurve of $C$ is also 
simple, torsion-free and rank-1.

\emph{Assume from now on that $C$ is tree-like.} 
Let $C_1,\dots,C_n$ denote the irreducible components of $C$. 
For each $n$-tuple ${\und d}=(d_1,...,d_n) \in \mathbb{Z}^n $, let 
$\bar J^{\und d}$ be the scheme parameterizing simple torsion-free rank-1 
sheaves $\Ical$ over $C$ such that $\deg(\Ical|_{C_i})=d_i$ for 
$i=1,\dots,n$. For each $i=1,\dots,n$ and each integer $e$, let 
$\bar J^e_i$ be the corresponding scheme 
for $C_i$. Restriction to the components of $C$ gives us a 
natural map: 
$$
\bar\pi^{\und d}=(\bar\pi_1^{\und d},\dots,\bar\pi_n^{\und d}):
\bar J^{\und d}\longrightarrow\bar J^{d_1}_1\times\cdots\times\bar J^{d_n}_n.
$$
It follows from \cite{e1}, Prop.~3.2, p.~172, that $\bar\pi^{\und d}$ is an 
isomorphism.

For each ${\und d}=(d_1,...,d_n) \in \mathbb{Z}^n $, let 
$J^{\und d}\subseteq\bar J^{\und d}$ be the open subscheme parameterizing 
invertible sheaves. Likewise, for each $i=1,\dots,n$ and each integer $e$, let 
$J^e_i\subseteq\bar J^e_i$ be the open subscheme parameterizing 
invertible sheaves. Then $\bar\pi^{\und d}$ restricts to an isomorphism
$$
\pi^{\und d}=(\pi_1^{\und d},\dots,\pi_n^{\und d}):
J^{\und d}\longrightarrow J^{d_1}_1\times\cdots\times J^{d_n}_n.
$$

\emph{Assume from now on that the singularities of $C$ are planar.} Then so 
are the singularities of $C_i$ for each $i$. It follows from 
\cite{aik}, (9), p.~8, that $\bar J^e_i$ is integral for each $e$. Thus, 
by \cite{groth}, Thm.~3.1, p.~232-06, there is a scheme parameterizing line 
bundles over $\bar J^e_i$, whose connected component of the identity 
we denote by $\Pic^0(\bar J^e_i)$. Similarly, since 
$\bar\pi^{\und d}$ is an isomorphism, $\bar J^{\und d}$ is integral for each 
$\und d$, and there is a corresponding scheme for $\bar J^{\und d}$, whose 
connected component of the identity we denote by $\Pic^0(\bar J^{\und d})$.

\begin{proposition} Let $\und d\in\mathbb Z^n$. For each $j=1,\dots,n$, let 
$\Ical_j$ be a degree-$d_j$ torsion-free rank-$1$ sheaf over $C_j$, and let 
$$
\iota_j:\bar J^{d_j}_j\longrightarrow\bar J^{\und d}
$$ 
be the composition of the inverse of $\bar\pi^{\und d}$ with the map given by 
$$
\Ical\mapsto(\Ical_1,\dots,\Ical_{j-1},\Ical,\Ical_{j+1},\dots,\Ical_n).
$$
Then the induced map
$$
\iota^*:=(\iota_1^*,\dots,\iota_n^*):\Pic^0(\bar J^{\und d})\longrightarrow
\Pic^0(\bar J^{d_1}_1)\times\dots\times\Pic^0(\bar J^{d_n}_n)
$$ 
is an isomorphism. Furthermore, if $\Lcal_j$ is a degree-$0$ invertible 
sheaf over $C_j$ for $j=1,\dots,n$, then replacing each $\Ical_j$ by 
$\Ical_j\otimes\Lcal_j$ does not change $\iota^*$.
\end{proposition}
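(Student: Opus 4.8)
The plan is to use the isomorphism $\bar\pi^{\und d}$ to replace $\bar J^{\und d}$ by the product $\bar J^{d_1}_1\times\cdots\times\bar J^{d_n}_n$ and to exhibit $\iota^*$ as the inverse of the obvious pullback-along-projections map, which is visibly independent of the sheaves $\Ical_j$. Write $H:=\Pic^0(\bar J^{\und d})$ and $G:=\Pic^0(\bar J^{d_1}_1)\times\cdots\times\Pic^0(\bar J^{d_n}_n)$. First I would identify $\bar J^{\und d}$ with $\prod_i\bar J^{d_i}_i$ through $\bar\pi^{\und d}$, so that each restriction $\bar\pi^{\und d}_i$ becomes the $i$-th projection $q_i$ and each $\iota_j$ becomes the inclusion of the $j$-th factor sending $\Ical$ to the point whose $j$-th coordinate is $\Ical$ and whose $i$-th coordinate is $[\Ical_i]$ for $i\neq j$. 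I would then introduce the homomorphism of connected commutative group schemes
\[
\Psi\colon G\longrightarrow H,\qquad (\Lcal_1,\dots,\Lcal_n)\longmapsto q_1^*\Lcal_1\otimes\cdots\otimes q_n^*\Lcal_n,
\]
which lands in $H$ because pullback preserves algebraic equivalence, and which depends only on the projections $q_i=\bar\pi^{\und d}_i$ and not on the chosen sheaves $\Ical_j$.

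Next I would verify the formal identity $\iota^*\circ\Psi=\mathrm{id}_G$. Since $q_i\iota_j$ is the identity of $\bar J^{d_j}_j$ when $i=j$ and a constant map when $i\neq j$, and pullback along a constant map sends every line bundle to the trivial one, the composite $\iota_j^*q_i^*$ is the identity for $i=j$ and is zero for $i\neq j$; summing over $i$ gives $\iota^*\circ\Psi=\mathrm{id}_G$. Thus $\Psi$ is a section of the group homomorphism $\iota^*$, hence a closed immersion, and since these Picard schemes are commutative there is a splitting $H\cong\Psi(G)\times K$ with $K:=\ker(\iota^*)$.

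The crux is to prove that $K$ is trivial, and I would argue on Lie algebras. Using $\mathrm{Lie}\,\Pic^0(Z)=H^1(Z,\Ocal_Z)$ for $Z$ proper and integral, together with the Künneth formula (each $\bar J^{d_i}_i$ is integral and proper, so $H^0(\bar J^{d_i}_i,\Ocal)=k$), one obtains $H^1(\bar J^{\und d},\Ocal)\cong\bigoplus_i H^1(\bar J^{d_i}_i,\Ocal)$ compatibly with $d\Psi$, so that $d\Psi\colon\mathrm{Lie}\,G\to\mathrm{Lie}\,H$ is an isomorphism. Hence $\mathrm{Lie}\,K=0$, so $K$ is étale; and $K$ is connected because $H\cong\Psi(G)\times K$ is connected with $\Psi(G)\cong G$ connected. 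A connected étale group scheme over the algebraically closed field $k$ is trivial, so $K$ is trivial, $\Psi$ is an isomorphism, and $\iota^*=\Psi^{-1}$.

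Independence is then immediate and requires no further computation: the identity $\iota^*\circ\Psi=\mathrm{id}_G$ holds whatever the base sheaves $\Ical_j$ are, while $\Psi$ does not involve them at all, so $\iota^*=\Psi^{-1}$ is one and the same map for every choice; in particular it is unchanged when each $\Ical_j$ is replaced by $\Ical_j\otimes\Lcal_j$, which is again a degree-$d_j$ simple torsion-free rank-$1$ sheaf over $C_j$. I expect the only genuinely non-formal input to be the product decomposition of $\Pic^0$ for the possibly singular integral projective varieties $\bar J^{d_i}_i$; the retraction reduces this to the Künneth computation of the Lie algebra, but the argument must be routed through $K$ and $\mathrm{Lie}\,K$ rather than through a naive dimension count, precisely because $\Pic^0$ can fail to be reduced in positive characteristic. (Classically one could instead invoke the theorem of the cube to prove $\Mcal\cong\bigotimes_i q_i^*\iota_i^*\Mcal$ directly for $\Mcal\in H$.)
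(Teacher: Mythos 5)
Your proof is correct, but it takes a genuinely different route from the paper's on both halves of the statement. For the isomorphism, the paper also passes to the product $\bar J^{d_1}_1\times\cdots\times\bar J^{d_n}_n$ and also starts from the formal identity $\iota^*\circ\Psi=\mathrm{id}$; but where you kill the complementary factor $K=\ker(\iota^*)$ by the Lie-algebra computation $\mathrm{Lie}\,\Pic^0(Z)=H^1(Z,\Ocal_Z)$ plus K\"unneth (and the fact that a connected group scheme over $k=\bar k$ with trivial Lie algebra is trivial), the paper proves $\Psi\circ\iota^*=\mathrm{id}$ directly, applying Mumford's Theorem of the Cube inductively to the universal line bundle on $\bar J^{\und d}\times\Pic^0(\bar J^{\und d})$ rigidified along $([\Ical_1],\dots,[\Ical_n])\times\Pic^0(\bar J^{\und d})$. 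Both arguments are valid and both correctly avoid a naive dimension count, which would fail when $\Pic^0$ is non-reduced; yours trades the Theorem of the Cube for the cohomological description of the Lie algebra of the Picard scheme. The more striking divergence is in the independence statement. You observe that it is a formal consequence of the first part: $\iota^*\circ\Psi=\mathrm{id}$ holds for every choice of the $\Ical_j$, while $\Psi$ involves only the projections, so $\iota^*=\Psi^{-1}$ is one fixed map --- indeed independent of the points $[\Ical_j]$ altogether, not merely under degree-$0$ twists. This is correct. The paper instead reduces the independence to showing that the translation $\tau_{\Lcal_j}$ acts as the identity on $\Pic^0(\bar J^{d_j}_j)$, which it deduces from results of Esteves--Gagn\'e--Kleiman and Esteves--Kleiman together with Arinkin's Theorem C. That route records a separately interesting invariance of $\Pic^0(\bar J^{d_j}_j)$ under translation, but at the cost of invoking deep autoduality input that your shortcut does not need; your observation would in fact apply verbatim to the paper's own proof of the first statement, since there too the inverse of $\iota^*$ is exhibited as a map $\psi$ not depending on the $\Ical_j$.
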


\begin{proof} Since $\bar\pi^{\und d}$ is an isomorphism, and since the 
$\bar J^{d_j}_j$ are complete varieties, the proof of the first statement is 
a simple application of the \emph{Theorem of the Cube}, in an
extended version: 
\emph{Let 
$X_1,\dots,X_n$ be complete varieties and $P_1,\dots,P_n$ points on each of 
them. 
Set $X:=X_1\times\dots\times X_n$. For each $j=1,\dots,n$, let 
$\phi_j:X_j\to X$ be the map taking $P$ to 
$(P_1,\dots,P_{j-1},P,P_{j+1},\dots,P_n)$. Then 
$$
\phi^*:=(\phi_1^*,\dots,\phi_n^*):\Pic^0(X)\longrightarrow
\Pic^0(X_1)\times\dots\times\Pic^0(X_n)
$$
is an isomorphism.} 

However, 
not being able to find a proof of the above statement in the literature, 
we give its proof. Consider the map 
$$
\psi:\Pic^0(X_1)\times\dots\times\Pic^0(X_n)\longrightarrow\Pic^0(X)
$$
given by taking $(\Lcal_1,\dots,\Lcal_n)$ to 
$\Lcal_1\boxtimes\cdots\boxtimes\Lcal_n$. It is clear that $\phi^*\psi=1$. We 
need only show $\psi\phi^*=1$ as well.

Let $Y:=\Pic^0(X)$ and $Q\in Y$ be the point corresponding to the trivial line 
bundle. Let $\Lcal$ denote the universal line bundle on 
$X\times Y$ rigidified 
along $P\times Y$, where $P:=(P_1,\dots,P_n)$. Thus $\Lcal|_{P\times Y}$ and 
$\Lcal|_{X\times Q}$ are trivial. For $j=1,\dots,n$, let 
$\lambda_j:=(\phi_j,1_Y)$ and 
$\rho_j:X\times Y\to X_j\times Y$ be the projection map. 
Then $\psi\phi^*$ is induced by 
$\rho_1^*\lambda_1^*\Lcal\otimes\cdots\otimes\rho_n^*\lambda_n^*\Lcal$. 

Let $\Mcal:=\rho_1^*\lambda_1^*\Lcal\otimes\cdots\otimes\rho_n^*\lambda_n^*\Lcal
\otimes\Lcal^{-1}$. Then $\Mcal$ is a line bundle on $X\times Y$ such that 
$\lambda_j^*\Mcal$ is trivial for each $j=1,\dots,n$ and $\Mcal|_{X\times Q}$ is 
trivial. We need only show that any such line bundle is trivial.

The case $n=1$ is trivial, while the case $n=2$ is a direct application of 
the Theorem of the Cube, as stated in \cite{muav}, p.~91. For the general 
case, we may assume by induction that the restriction of $\Mcal$ 
to $X_1\times\dots\times X_{n-1}\times P_n\times Y$ is trivial. Then, 
applying the Theorem of the Cube to the varieties 
$X_1\times\dots\times X_{n-1}$, $X_n$ and $Y$ finishes the proof of the 
first statement.

As for the second statement, 
for each $j=1,\dots,n$ consider the translation 
$\tau_{\Lcal_j}:\bar J^{d_j}_j\to\bar J^{d_j}_j$, sending $\Ical$ to 
$\Ical\otimes\Lcal_j$. It is enough to prove that the induced map 
$\tau_{\Lcal_j}^*$ on $\Pic^0(\bar J^{d_j}_j)$ is the identity. This is proved 
in \cite{ek}, Prop.~2.5, p.~489, for curves whose singularities are double 
points. However, even if $C_j$ has worst singularities, but planar, 
an analysis of the proof given there shows first that 
we may assume, in any case, that $d_j=0$. 

Second, fix any invertible sheaf $\Mcal$ on $C_j$ of 
degree 1 and consider the corresponding Abel maps $A_1,A_2:C\to\bar J^0_j$, 
the first defined by $P\mapsto\Mcal\otimes\Ical_P$, the second 
by $P\mapsto\Mcal\otimes\Lcal_j\otimes\Ical_P$. Clearly, 
$A_2=\tau_{\Lcal_j}A_1$. By \cite{e}, Prop.~3.7, p.~605, the pullback maps 
$A_1^*,A_2^*:\Pic^0(\bar J^0_j)\to J^0_j$ are equal. Thus 
$A_1^*=A_1^*\tau_{\Lcal_j}^*$. 

Finally, it follows from \cite{ar}, Thm.~C, 
that a certain map $\rho:J^0_j\to\Pic^0(\bar J^0_j)$ is an isomorphism. This 
map is the one called $\beta$ in \cite{e}, Prop.~2.2, p.~595, where it is 
proved that $A_1^*\beta=1$. So $A_1^*$ is an isomorphism as well. Since 
$A_1^*=A_1^*\tau_{\Lcal_j}^*$, it follows that $\tau_{\Lcal_j}^*$ is the 
identity.
\end{proof}

Let $A: C\to\bar J^{\und d}$ be a map. If $C$ is irreducible, we say that 
$A$ is an \emph{Abel map} if 
there is an invertible sheaf $\Lcal$ over $C$ such 
that $A$ sends $P$ to $\Lcal\otimes\Ical_P$ for each $P\in C$. More 
generally, we say that $A$ is a 
\emph{decomposable Abel map} 
if $\bar\pi_j^{\und d}A|_{C_i}$ is constant for $i\neq j$ and 
$A_i:=\bar\pi_i^{\und d}A|_{C_i}$ is an Abel map for 
each integer $i=1,\dots,n$, that is, there is an invertible 
sheaf $\mathscr{L}_i$ over $C_i$ 
such that $A_i$ sends $P$ 
to $\mathscr{L}_i\otimes\Ical_P$ 
for each $P\in C_i$. We say that $(\Lcal_1,\dots,\Lcal_n)$ \emph{describes} 
$A$. 

Indeed, for $i\neq j$ let $Y_{i,j}$ be the 
connected component of $\overline{C-C_j}$ 
containing $C_i$. Since $C$ is treelike, $Y_{i,j}$ meets $C_j$ at a unique 
point $N_{i,j}$. Then $\bar\pi_j^{\und d}A|_{C_i}$ has constant image 
$\Lcal_j\otimes\Ocal_{C_j}(-N_{i,j})$. 

Conversely, given invertible sheaves 
$\Lcal_1,\dots,\Lcal_n$ on $C_1,\dots,C_n$ 
of degrees $d_1+1,\dots,d_n+1$, there is a decomposable Abel map 
$A: C\to\bar J^{\und d}$ described by $(\Lcal_1,\dots,\Lcal_n)$: For each 
$P\in C$ and each $j=1,\dots,n$, the map $\bar\pi_j^{\und d}A$ sends $P$ to 
$\Lcal_j\otimes\Ocal_{C_j}(-P)$ if $P\in C_j$, and to 
$\Lcal_j\otimes\Ocal_{C_j}(-N)$ if $P\not\in C_j$, where $N$ is the point of 
intersection with $C_j$ of the connected component of $\overline{C-C_j}$ 
containing $P$.

The Abel maps constructed in \cite{l} are decomposable, as it follows from 
\cite{l}, Lemma 3, p.~46.

\begin{theorem} Let $C$ be a treelike curve whose singularities are planar. 
Let $A: C\to\bar J^{\und d}$ be the decomposable Abel map described 
by $(\Lcal_1,\dots,\Lcal_n)$. Then the induced map
$$
A^*:\Pic^0(\bar J^{\und d})\longrightarrow\Pic^0(C)
$$
is an isomorphism which does not depend on the choice of 
$(\Lcal_1,\dots,\Lcal_n)$
\end{theorem}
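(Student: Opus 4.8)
The plan is to reduce the statement to its component-wise analogue, Arinkin's Autoduality Theorem for the irreducible curves $C_i$ with planar singularities, by fitting $A^*$ into a commutative square whose other three arrows are already known to be isomorphisms. Concretely, I would consider
$$
\begin{CD}
\Pic^0(\bar J^{\und d}) @>{A^*}>> \Pic^0(C)\\
@V{\iota^*}VV @VV{r}V\\
\Pic^0(\bar J^{d_1}_1)\times\cdots\times\Pic^0(\bar J^{d_n}_n) @>{(A_1^*,\dots,A_n^*)}>> \Pic^0(C_1)\times\cdots\times\Pic^0(C_n)
\end{CD}
$$
where $r$ is restriction to the components and $\iota^*$ is the isomorphism of Proposition 1 attached to some fixed choice of auxiliary sheaves $\Ical_j$. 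The left arrow is an isomorphism by Proposition 1; the right arrow $r$ is an isomorphism because $C$ is treelike, being the restriction isomorphism $\pi^{\und0}\colon J^{\und0}\to\prod_i J^0_i$ under the identifications $\Pic^0(C)=J^{\und0}$ and $\Pic^0(C_i)=J^0_i$; and the bottom arrow is an isomorphism because each $A_i\colon C_i\to\bar J^{d_i}_i$ is an Abel map on an irreducible curve with planar singularities, so $A_i^*$ is an isomorphism by Arinkin's theorem, after the reduction to degree $0$ carried out at the end of the proof of Proposition 1. Once the square is shown to commute, the arrow $A^*$ is forced to be an isomorphism as well.

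The heart of the argument, and the step I expect to be the main obstacle, is the commutativity. It suffices to check it componentwise, i.e.\ that $r_i\circ A^*=A_i^*\circ\iota_i^*$ for each $i$, where $r_i$ is restriction to $C_i$. By functoriality of pullback, $r_i\circ A^*=(A|_{C_i})^*$ and $A_i^*\circ\iota_i^*=(\iota_i\circ A_i)^*$, so the identity would follow from the equality of maps $A|_{C_i}=\iota_i\circ A_i\colon C_i\to\bar J^{\und d}$. Comparing the two sides through the isomorphism $\bar\pi^{\und d}$, their $i$-th components agree, both being $A_i$ since $\bar\pi_i^{\und d}\iota_i=1$; but for $j\neq i$ the $j$-th component of $A|_{C_i}$ is the constant sheaf $\Lcal_j\otimes\Ocal_{C_j}(-N_{i,j})$, while that of $\iota_i\circ A_i$ is the constant sheaf $\Ical_j$. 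These match only if the fixed sheaves defining $\iota_i$ are taken to be $\Ical_j=\Lcal_j\otimes\Ocal_{C_j}(-N_{i,j})$, a choice depending on $i$ that cannot be made simultaneously for all the $\iota_i$. The way around this is exactly the second statement of Proposition 1: since any two invertible choices of fixed sheaves of the correct degree $d_j$ differ by a degree-$0$ twist, the map $\iota_i^*$ does not depend on them, and I am free to evaluate each $\iota_i^*$ using the $i$-adapted tuple, for which $A|_{C_i}=\iota_i\circ A_i$ holds on the nose. This verifies the $i$-th column of the square, and hence its commutativity.

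Finally, for independence of $A^*$ from $(\Lcal_1,\dots,\Lcal_n)$, since $r$ is an isomorphism it is enough to see that $r\circ A^*=(A_1^*,\dots,A_n^*)\circ\iota^*$ is independent of the tuple. The factor $\iota^*$ is fixed once and for all, being defined without reference to $A$. For the factors $A_i^*$, two tuples $(\Lcal_i)$ and $(\Lcal_i')$ describing Abel maps into the same $\bar J^{\und d}$ produce component Abel maps $A_i$ and $A_i'$ differing by the translation $\tau_{\Lcal_i'\otimes\Lcal_i^{-1}}$ by a degree-$0$ bundle; by the translation-invariance of Abel-map pullbacks already invoked in the proof of Proposition 1 (via \cite{e}, Prop.~3.7), one has $A_i^*=(A_i')^*$. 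Hence $r\circ A^*=r\circ(A')^*$, and since $r$ is an isomorphism, $A^*=(A')^*$.
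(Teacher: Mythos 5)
Your proposal is correct and follows essentially the same route as the paper: the same commutative square reducing $A^*$ to the component pullbacks $A_i^*$ via $\iota^*$ and the restriction isomorphism, with the key subtlety — that the auxiliary tuple making $A|_{C_i}=\iota_i\circ A_i$ hold depends on $i$ — resolved exactly as the paper does, by the independence statement of Proposition 1. The independence of $A^*$ from $(\Lcal_1,\dots,\Lcal_n)$ is likewise argued as in the paper, from the independence of $\iota^*$ and the translation-invariance of the $A_i^*$.
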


\begin{proof} For $j\neq i$, let $N_{j,i}$ be the point of 
intersection of $C_i$ with the connected component of $\overline{C-C_i}$ 
containing $C_j$. For each $j=1,\dots,n$, let $A_j:=\pi_j^{\und d}A|_{C_j}$ and 
define $\iota_j:\bar J^{d_j}_j\to\bar J^{\und d}$ as 
the composition of the inverse of 
$\bar\pi^{\und d}$ with the map given by 
$$
\Ical\mapsto(\Lcal_1(-N_{j,1}),\dots,\Lcal_{j-1}(-N_{j,j-1}),\Ical,
\Lcal_{j+1}(-N_{j,j+1}),\dots,\Lcal_n(-N_{j,n})),
$$
where $\Lcal_i(-N_{j,i}):=\Lcal_i\otimes\Ocal_{C_j}(-N_{j,i})$ for 
$i\neq j$. Then 
$$
\iota_jA_j=A|_{C_j}\quad\text{for $j=1,\dots,n$.}
$$

Thus we obtain the 
commutativity of the diagram of maps
$$
\begin{CD}
\textrm{Pic}^0(\bar J^{\und{d}}) @>A^*>> J\\
@V\iota^*VV @V\pi^{\und 0}VV\\
\Pic^0(\bar J^{d_1}_1)\times\cdots\times\Pic^0(\bar J^{d_n}_n) 
@>(A_1^*,\dots,A_n^*)>> J^0_1\times \cdots \times J^0_n 
\end{CD}
$$
where $\iota^*:=(\iota_1^*,\dots,\iota_n^*)$. It follows from Statements 
1 and 2 of Proposition 1 that $\iota^*$ is an isomorphism. Furthermore, 
since $A_j$ is an Abel map, it follows from \cite{ar}, Thm.~C, and 
\cite{e}, Prop.~2.2, p.~595, as in the proof of Proposition 1, 
that $A_j^*$ is an 
isomorphism for each $j=1,\dots,n$. Finally, $\pi^{\und 0}$ is an 
isomorphism because $C$ is treelike. Then the 
commutativity of the diagram above yields that $A^*$ is an isomorphism.

That $A^*$ does not depend on the choice of $(\Lcal_1,\dots,\Lcal_n)$ follows 
from the independence of $\iota^*$, stated in Proposition 1, and the 
independence of the $A_i^*$, stated in \cite{e}, Prop.~3.7, p.~605.
\end{proof}

\end{document}